\documentclass[12pt, leqno]{article}
\usepackage{amsmath}
\usepackage{amsthm}
\usepackage{amssymb, latexsym}
\usepackage{enumerate}
\usepackage{url}
\usepackage[arrow, matrix]{xy}

\newtheorem{theorem}{Theorem}
\newtheorem{lemma}[theorem]{Lemma}
\newtheorem{proposition}[theorem]{Proposition}
\newtheorem{corollary}[theorem]{Corollary}
\newtheorem{problem}[theorem]{Problem}

\newcommand{\Z}{\mathbb{Z}}
\newcommand{\Q}{\mathbb{Q}}

\renewcommand{\P}{\mathfrak{P}}

\newcommand{\D}{\mathfrak{D}}
\renewcommand{\O}{\mathcal{O}}

\newcommand{\ol}{\overline}

\pagestyle{myheadings}
\markright{C. Frei \hfill The extension problem \hfill}

\begin{document}
\title{Sums of units in function fields II - The extension problem}
\author{Christopher Frei}
\date{}

\maketitle

\begin{abstract}
In 2007, Jarden and Narkiewicz raised the following question: Is it true that each algebraic number field has a finite extension $L$ such that the ring of integers of $L$ is generated by its units (as a ring)? In this article, we answer the analogous question in the function field case. 

More precisely, it is shown that for every finite non-empty set $S$ of places of an algebraic function field $F | K$ over a perfect field $K$, there exists a finite extension $F' | F$, such that the integral closure of the ring of $S$-integers of $F$ in $F'$ is generated by its units (as a ring).
\end{abstract}

\renewcommand{\thefootnote}{}
\footnote{2010 \emph{Mathematics Subject Classification}: Primary 11R58; Secondary 11R27.}
\footnote{\emph{Key words and phrases}: function field, sums of units, generated by units}
\renewcommand{\thefootnote}{\arabic{footnote}}
\setcounter{footnote}{0}

\section{Introduction}\label{introduction}

In their paper \cite{Jarden2007}, Jarden and Narkiewicz proved that, for every finitely generated integral domain $R$ of characteristic $0$ and every positive integer $N$, there exists an element of $R$ that can not be written as a sum of at most $N$ units. This also follows from a result obtained by Hajdu \cite{Hajdu2007}, and applies in particular to the case where $R$ is the ring of integers of an algebraic number field. The author recently showed an analogous result for the case where $R$ is a ring of $S$-integers of an algebraic function field of one variable over a perfect field \cite{Frei2010}.

A related question is whether or not a ring $R$ is generated by its units. If we take $R$ to be a ring of integers of an algebraic number or function field, both possibilities occur. Complete classifications have been found in many special cases, including rings of integers of quadratic number fields \cite{Ashrafi2005, Belcher1974} and certain types of cubic and quartic number fields \cite{Filipin2008, Tichy2007, Ziegler2008}, and rings of $S$-integers of quadratic function fields \cite{Frei2010}. All of these results have in common that the unit group of the ring in question is of rank $1$. The author is not aware of any general results for rings of integers whose unit groups have higher rank.

Among other problems, Jarden and Narkiewicz asked the following question, which was later called the extension problem.

\begin{problem}\cite[Problem B]{Jarden2007}\label{problem}
Is it true that each number field has a finite extension $L$ such that the ring of integers of $L$ is generated by its units.
\end{problem}

This is of course true for finite abelian extensions of $\Q$, since those are contained in cyclotomic number fields by the Kronecker-Weber theorem, and the ring of integers of a cyclotomic number field is generated by a root of unity. The scope of this paper is an affirmative answer to the function field version of Problem \ref{problem}. Let us fix some basic notation before we state the theorem.

Regarding function fields, we use the notation from \cite{Rosen2002} and \cite{Stichtenoth1993}. In particular, an algebraic function field over a field $K$ is a finitely generated extension $F | K$ of transcendence degree $1$. The algebraic closure of $K$ in $F$ is called the (full) constant field of $F | K$. An element $t \in F$ is called a separating element for $F | K$, if the extension $F | K(t)$ is finite and separable. Following \cite{Stichtenoth1993}, we regard the places $P$ of $F | K$ as the maximal ideals of discrete valuation rings $\O_P$ of $F$ containing $K$. In particular, the places correspond to (surjective) discrete valuations $v_P : F \to \Z \cup \{\infty\}$ of $F$ over $K$. Let $n$ be a positive integer. We say that a place $P$ of $F | K$ is a zero of an element $f \in F$ of order $n$, if $v_P(f) = n > 0$, and $P$ is a pole of $f$ of order $n$, if $v_P(f) = -n < 0$. If $S$ is a finite set of places of $F | K$ then the ring $\O_S$ of $S$-integers of $F$ is the set of all elements of $F$ that have no poles outside of $S$. Moreover, we write $K^\times := K \smallsetminus\{0\}$.

\begin{theorem}\label{extension_problem_function_field}
Let $K$ be a perfect field, $F | K$ an algebraic function field over $K$, and $S \neq \emptyset$ a finite set of places of $F | K$. Let $\O_S$ be the ring of $S$-integers of $F$. Then there exists a finite extension $F'|F$ such that the integral closure of $\O_S$ in $F'$ is generated by its units (as a ring). 
\end{theorem}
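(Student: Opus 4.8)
The plan is to make each generator of $\O_S$ into a sum of two units by passing to a suitable separable extension, and then to verify that the resulting integral closure is exactly the ring generated by those units. Since $S \neq \emptyset$, the ring $\O_S$ is the coordinate ring of a smooth affine curve over $K$ (here I use that $K$ is perfect, so the model behaves geometrically well), hence a finitely generated $K$-algebra; write $\O_S = K[f_1, \ldots, f_m]$. Whenever convenient I will also replace $F$ by its compositum with a finite extension of $K$ — a constant field extension, which is an admissible finite extension of $F$ — in order to have enough constants at my disposal; note that enlarging the constant field only enlarges the group of units.

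The basic gadget is the following. For $f \in \O_S$ let $F(u)$ be generated by a root $u$ of $x^2 - f x + 1$. This polynomial is separable (its derivative $2x - f$ is nonzero since $f \neq 0$), so $F(u) \,|\, F$ is separable of degree $\le 2$; moreover $u$ is integral over $\O_S$, its inverse $u^{-1} = f - u$ is integral as well, so $u$ is a unit in the integral closure and
\[
  f = u + u^{-1}
\]
exhibits $f$ as a sum of two units. Applying this to each $f_i$ and setting $F' := F(u_1, \ldots, u_m)$ with $u_i^2 - f_i u_i + 1 = 0$, I obtain a finite separable extension in which every $f_i$, and hence every element of $\O_S$, lies in the subring $R := \O_S[u_1^{\pm 1}, \ldots, u_m^{\pm 1}]$. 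Since $K = \Z[K^\times]$ and $f_i = u_i + u_i^{-1}$, this $R$ is generated by the units $u_i^{\pm 1}$ together with $K^\times$, so $R$ is generated by units as a ring.

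It therefore remains to show that $R$ coincides with the full integral closure $\O'$ of $\O_S$ in $F'$; this is the crux. Equivalently, writing $F' = K(u_1, \ldots, u_m)$, I must show that the morphism from the smooth affine curve with coordinate ring $\O'$ to the torus $\mathbb{G}_m^m$ given by $(u_1, \ldots, u_m)$ is a closed immersion, so that its image is already normal. I expect to arrange this by general position: one is free to enlarge and perturb the generating set (replacing $f_i$ by $f_i + c_i$ with $c_i \in K^\times$, or adjoining further generators), and each such change keeps all the $f_i$ expressible through units. The aim of these choices is to make the cover $F' \,|\, F$ as mild as possible at the finite places — ensuring the discriminants $f_i^2 - 4$ have only simple zeros away from $S$, that the individual quadratic covers ramify over pairwise disjoint reduced loci, and that the $u_i$ separate the points and tangent directions of the curve — so that the standard criterion identifying the integral closure of a compositum of tamely, transversally ramified extensions with the compositum of the individual maximal orders applies. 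Once $R = \O'$, the theorem follows.

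The main obstacle is precisely this integral-closure computation: a priori $\O'$ could be strictly larger than $R$, and then the extra elements would not obviously be polynomials in units. The delicate points I anticipate are (i) achieving general position simultaneously for all generators, where the preliminary constant field extension supplies the room needed when $K$ is finite, and (ii) the behaviour in characteristic $2$, in which the extensions $x^2 - f x + 1$ are wildly ramified and the discriminant criterion must be replaced by a direct Artin–Schreier analysis of $v^2 + v = f^{-2}$ (or a different tame gadget for writing $f$ as a sum of units must be substituted). Controlling the ramification so that $R$ is normal is where the real work lies; the reduction to a finite generating set and the algebraic identity $f = u + u^{-1}$ are the easy parts.
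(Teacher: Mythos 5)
Your gadget is the right one --- it is exactly the paper's: adjoin a root $u$ of $X^2 - fX + c$ with $c$ a constant, so that $f = u + c/u$ is a sum of units, and the whole difficulty is indeed to show that the resulting integral closure is no larger than the ring generated by the units. But that difficulty is precisely what you do not resolve: "I expect to arrange this by general position" and "the standard criterion \dots applies" is the entire theorem deferred, not proved. The paper's proof of this step occupies all of its technical machinery: one first chooses $a$ in a finite constant extension so that the zeros of $t\pm 2a$ are unramified in the ambient field (Lemma 8, which needs the constant field extension exactly when $K$ is finite, as you anticipate); then Abhyankar's lemma pins down every ramification index in $F(x)\,|\,F$, and the identity $(2x-t)^2=(t+2a)(t-2a)$ together with unique factorization of ideals shows that the different of $\O[x]\,|\,\O$ equals $(f'(x))$, whence $\O[x]$ is the full integral closure (Proposition 4). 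A further localization lemma (Lemma 6(b)) is needed to transport "integrally closed over $K[t]$" to "integrally closed over $\O_S$". None of this is routine bookkeeping, and your sketch contains no substitute for it.

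Moreover, your specific route to general position has a genuine obstruction in positive characteristic $p>2$. If a generator $f_i$ is a $p$-th power in $F$ --- which can certainly happen --- then, since $K$ is perfect, $f_i + c \pm 2$ is again a $p$-th power for \emph{every} constant $c$, so every zero of the discriminant $(f_i+c)^2-4$ has order divisible by $p$. Hence no constant perturbation ever produces the simple zeros your criterion requires, and $\O_S[u_i]$ is genuinely smaller than the integral closure (the different exponent at such a place is $p$ rather than $1$). The paper circumvents this by first extracting the maximal $p^m$-th root $s_i$ of $t_i$ (which still lies in $\O_S$ and is a separating element), making $s_i$ a sum of units, and recovering $t_i = s_i^{p^m}$ as a sum of units afterwards; your plan has no analogue of this step. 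There is also a structural difference worth noting: the paper adjoins the $u_i$ one at a time, rechecking the unramifiedness hypothesis over the \emph{new} field at each stage, whereas you adjoin them simultaneously and would additionally need a coprime-discriminant argument for the compositum --- workable in principle, but again not carried out. Finally, you correctly flag characteristic $2$ and the Artin--Schreier replacement (the paper's Proposition 5 does exactly this), but flagging it is not doing it.
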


The basic idea to prove Theorem \ref{extension_problem_function_field} is the following: First, choose a finite set $\{t, t_1, \ldots, t_n\}$ of generators  of $\O_S$ over $K$. Then, for each $1 \leq i \leq n$, iteratively construct a finite extension $F_i | F$ such that
\begin{enumerate}[(I.)]
 \item $t$, $t_1$, $\ldots$, $t_i$ are sums of units in the integral closure of $\O_S$ in $F_i$, and
 \item the integral closure of $\O_S$ in $F_i$ is generated by units as a ring extension of $\O_S$.
\end{enumerate}
Then the integral closure of $\O_S$ in $F_n$ is generated by units and sums of units as an extension of $K$, thus it is generated by its units. Section \ref{auxiliary} provides the tools to construct the extension fields $F_i$. In Section \ref{proof}, everything is put together.

\section{Auxiliary results}\label{auxiliary}

The following lemma illustrates the idea explained at the end of the introduction.

\begin{lemma}\label{make_t_unit_sum}
Let $K$ be a perfect field not of characteristic $2$ and $a \in K^\times$. Consider the extension of rational function fields $K(x) | K(t)$, where $t = x + a^2 / x$. Then the integral closure of $K[t]$ in $K(x)$ is $K[x, x^{-1}]$, which is generated (as a ring) by its units.

The only places of $K(t)$ that are ramified in $K(x)$ are the zeros of $t - 2 a$ and $t + 2 a$, both with ramification index $2$. 
\end{lemma}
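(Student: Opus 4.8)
The plan is to handle the two assertions separately, treating the integral closure first and the ramification second.

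For the integral closure, I would begin by noting that $t = (x^2+a^2)/x$ expresses $t$ as a rational function of $x$ of degree $2$ in lowest terms (here $a \neq 0$ ensures $x \nmid x^2 + a^2$), so $[K(x):K(t)] = 2$ and $x$ is a root of the monic polynomial $X^2 - tX + a^2 \in K[t][X]$. Its second root is $a^2/x$, so both $x$ and $a^2/x$ — hence also $x^{-1} = a^{-2}(a^2/x)$ — are integral over $K[t]$; since the integral closure is a ring containing $K$, it contains all of $K[x,x^{-1}]$. To get the reverse inclusion I would identify the places of $K(x)$ lying above the pole $P_\infty$ of $t$: from $t = x + a^2/x$ one checks that $t$ has a simple pole exactly at the places $x = 0$ and $x = \infty$, each unramified over $P_\infty$. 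Since $K[t]$ is precisely the ring of elements of $K(t)$ with no pole outside $P_\infty$, its integral closure in $K(x)$ is the ring of elements of $K(x)$ with no pole outside $\{x=0,\ x=\infty\}$, which is exactly $K[x,x^{-1}]$. (Equivalently, $K[x,x^{-1}]$ is a localization of the PID $K[x]$, hence integrally closed with fraction field $K(x)$, which forces it to be the full integral closure.) That $K[x,x^{-1}]$ is generated by its units is immediate, as it is generated as a ring by the units $x$ and $x^{-1}$.

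For the ramification statement I would complete the square, which is legitimate since $\mathrm{char}\,K \neq 2$. Setting $u = 2x - t$ gives $K(x) = K(t)(u)$ and, using $x^2 - tx = -a^2$,
\[ u^2 = (2x-t)^2 = t^2 - 4a^2 = (t-2a)(t+2a). \]
This presents $K(x)|K(t)$ as a quadratic radical extension, whose ramification is governed by the parity of the valuation of the radicand. At the zero $P$ of $t - 2a$ one has $t + 2a = 4a \in K^\times$ a unit, so for a place $Q$ above $P$ with ramification index $e$ we get $2\,v_Q(u) = v_Q(u^2) = e\,v_P(t-2a) = e$; hence $e$ is even, forcing $e = 2$ and a unique place above $P$. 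The zero of $t+2a$ is symmetric. At every other finite place the radicand $(t-2a)(t+2a) = t^2 - 4a^2$ is a unit, so (the residue characteristic being $\neq 2$) the extension is unramified there; and at $P_\infty$ the two places $x=0,\ x=\infty$ found above are already seen to be unramified. This yields exactly the claimed ramification.

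The routine parts are the degree and integrality computations; the point requiring care is the ramification analysis, specifically confirming that the infinite place does not ramify (the radicand has even degree $2$ with square leading coefficient) and that no finite place outside the zeros of $t \pm 2a$ contributes. The completing-the-square substitution reduces all of this to reading off the valuation of $(t-2a)(t+2a)$, after which the $e=2$ claim is forced by the parity argument.
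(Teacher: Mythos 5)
Your proposal is correct and follows essentially the same route as the paper: both reduce to the quadratic equation $X^2 - tX + a^2$, complete the square to the Kummer form $u^2 = (t-2a)(t+2a)$ (you carry out by hand the valuation-parity analysis that the paper delegates to Proposition III.7.3 of Stichtenoth), and identify the integral closure via the integrality of $x$ and $x^{-1}$ together with the fact that $K[x,x^{-1}]$ is an integrally closed localization of the PID $K[x]$ — an argument you give as your parenthetical alternative and which is the paper's actual one. Your primary, places-based derivation of the integral closure is a valid equivalent formulation of the same fact.
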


\begin{proof}
The minimal polynomial of $x$ over $K(t)$ is $X^2 - t X + a^2$, whence $K(x) = K(t, y)$, with $y^2 = t^2 - 4 a^2$. (Here we used the assumption that $K$ is not of characteristic $2$.) One can verify the assertions about ramification directly or use Proposition III.7.3 from \cite{Stichtenoth1993}.

Obviously, $x$ and $x^{-1}$ are integral over $K[t]$, and $K[t] \subseteq K[x, x^{-1}]$. Since $K[x, x^{-1}]$, as a ring of fractions of the principal ideal domain $K[x]$, is integrally closed, it is the integral closure of $K[t]$ in $K(x)$. Obviously, $x$, $x^{-1}$ and all elements of $K^\times$ are units in $K[x, x^{-1}]$, and the lemma is proved.
\end{proof}

The main step in the construction of the extension fields $F_i$ is carried out in the following proposition, which is the most important component of our proof of Theorem \ref{extension_problem_function_field}.

\begin{proposition}\label{main_work}
Let $K$ be a perfect field not of characteristic $2$, $F | K$ an algebraic function field with full constant field $K$, $t$ a separating element of $F | K$, and $\O$ the integral closure of $K[t]$ in $F$. Assume that there is some $a \in K^\times$ such that the zeros of $t + 2a$ and $t - 2a$ in $K(t)$ are unramified in $F | K(t)$. 

Let $F' := F(x)$, where $x$ is a root of the polynomial $f := X^2 - t X + a^2$, and let $\O'$ be the integral closure of $K[t]$ in $F'$. Then $K$ is the full constant field of $F' | K$, $x$ is a unit in $\O'$, $t = x + a^2/x$, and $\O' = \O[x]$. 
\end{proposition}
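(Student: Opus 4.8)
The plan is to verify the four assertions in a convenient order, saving the equality $\O' = \O[x]$ for last since it is the crux. The identity $t = x + a^2/x$ is immediate: from $x^2 - t x + a^2 = 0$ one gets $x^2 + a^2 = t x$, and dividing by $x$ (which is nonzero because $a \neq 0$) yields the claim. For the unit assertion I would note that the two roots of $f$ multiply to the constant term $a^2$, so the second root is $a^2/x = t - x$. Both $x$ and $t - x$ are integral over $K[t]$, hence lie in $\O'$, while their product $a^2$ is a unit of $K \subseteq \O'$; therefore $x^{-1} = a^{-2}(t - x) \in \O'$, so $x$ is a unit in $\O'$.

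To analyze the extension I would pass to discriminant form: since $\mathrm{char}\,K \neq 2$, completing the square gives $F' = F(\sqrt{d})$ with $d := t^2 - 4a^2 = (t - 2a)(t + 2a)$ and $x = (t + \sqrt{d})/2$. The key computation is the $\p$-adic valuation of $d$ at the primes $\p$ of $\O$. The hypothesis that the zeros of $t \pm 2a$ are unramified in $F \mid K(t)$, combined with the fact that these are simple zeros in $K(t)$ and that the zero of $t - 2a$ is distinct from that of $t + 2a$ (as $4a \neq 0$), forces $v_\p(d) = 1$ at every prime $\p$ dividing $d$ and $v_\p(d) = 0$ otherwise. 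In particular $d$ is not a square in $F$, so $f$ is irreducible and $[F' : F] = 2$.

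For the full constant field I would rule out a constant field extension. Any prime $\p \mid (t - 2a)$ has $v_\p(d) = 1$ odd, hence ramifies in the quadratic extension $F' = F(\sqrt{d})$. Since $K$ is perfect, every finite constant field extension of $F$ is separable and therefore unramified at all places, so $F'$ cannot be of the form $F \cdot K'$ for a proper extension $K'$ of $K$. As the full constant field $K'$ of $F'$ satisfies $[K' : K] = [F \cdot K' : F] \le [F' : F] = 2$ (using that $K$ is algebraically closed in $F$), the only possibility is $K' = K$.

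The main work is $\O' = \O[x]$, and I expect the ramified primes to be the sole genuine obstacle. The inclusion $\O[x] \subseteq \O'$ is clear, since $x$ is integral over $K[t]$. For the reverse inclusion I would work locally at each nonzero prime $\p$ of the Dedekind domain $\O$ and show $\O_\p[x] = \O'_\p$. The discriminant of $f$ is $d$; hence at every $\p \nmid d$ the order $\O_\p[x]$ has unit discriminant and is automatically maximal, so equals $\O'_\p$. At a prime $\p \mid d$ we have $v_\p(d) = 1$ by the computation above, so $\p$ is totally and tamely ramified and $\sqrt{d}$ is a uniformizer of the unique prime of $\O'$ above $\p$; because $2 \in \O_\p^\times$ and $t \in \O_\p$, we have $\O_\p[x] = \O_\p[\sqrt{d}]$, which is generated over $\O_\p$ by a uniformizer of $\O'_\p$, whence $\O_\p[x] = \O'_\p$. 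Matching the localizations at every prime gives $\O[x] = \O'$. The crucial point — and the reason the unramifiedness hypothesis cannot be dropped — is that it pins $v_\p(d)$ to exactly $1$ rather than a larger odd number at the ramified primes; were $v_\p(d) \ge 3$, the ring $\O_\p[\sqrt{d}]$ would fail to be integrally closed.
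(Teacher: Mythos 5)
Your proof is correct, but the route to the key equality $\O' = \O[x]$ is genuinely different from the paper's. The paper first determines all ramification in $F' | F$ via Abhyankar's lemma (every zero of $t \pm 2a$ in $F'$ has ramification index $2$ over $F$, all other places are unramified), then invokes the conductor--different criterion: $\O' = \O[x]$ if and only if the different $\D$ of $\O' | \O$ equals the principal ideal generated by $\delta(x) = f'(x) = 2x - t$; since tame ramification gives $\D = \prod_{\P \mid (t \pm 2a)} \P$ while $\delta(x)^2 = (t+2a)(t-2a)$ generates $\D^2$, unique ideal factorization yields $(\delta(x)) = \D$. You instead compute $v_\p(d) = 1$ at every prime divisor of $d = t^2 - 4a^2$ directly from the unramifiedness hypothesis and glue local statements: the discriminant criterion at $\p \nmid d$, and the fact that a totally and tamely ramified extension of a DVR is generated by a uniformizer at $\p \mid d$. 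Your approach is more elementary and self-contained (no Abhyankar, no different computation), and it has the side benefit of showing that $f$ is automatically irreducible under the hypotheses, a case the paper disposes of separately as trivial. Two small points would make it airtight: observe that $d$ is a non-unit of $\O$ (because $K[t]$ is integrally closed, so $\O \cap K(t) = K[t]$), so that at least one prime divisor of $d$ actually exists --- both your irreducibility claim and your constant-field argument need this; and the ``uniformizer generates'' fact is usually stated for complete DVRs, so in the non-complete setting you should either note that $\p$ has a unique prime above it with trivial residue extension (so $\O'_\p$ is local and Nakayama applied to $\O'_\p / \O_\p[\sqrt{d}]$ finishes it), or simply remark that $X^2 - d$ is Eisenstein at $\p$, whence $\O_\p[\sqrt{d}]$ is a discrete valuation ring and in particular integrally closed.
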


\begin{proof}
The roots of $f \in \O[X]$ in $F'$ are $x$ and $a^2 / x$, whence $x$ is a unit in $\O'$. Obviously, $t = x + a^2/ x$. If $f$ is reducible over $F$ then $x, a^2/x \in \O$, and the proposition holds trivially. Assume now that $f$ is irreducible over $F$. 

The field $F'$ is the compositum of $F$ and $K(x)$. Since the characteristic of $K$ is not $2$, the extension $F' | F$, and thus as well $F' | K(t)$ is separable. By Lemma \ref{make_t_unit_sum}, the only places of $K(t)$ that are ramified in $K(x)$ are the zeros of $t-2a$ and $t+2a$, both with ramification index $2$.

Let $P$ be a zero of $t + 2a$ or $t - 2a$ in $F' | K$. By Abhyankar's lemma (see, for example, Proposition III.8.9 from \cite{Stichtenoth1993}), the ramification index of $P$ over $K(t)$ is $2$. Here, we used the assumption that the zeros of $t - 2a$ and $t + 2a$ in $K(t)$ are unramified in the extension $F | K(t)$. Therefore, the ramification index of $P$ over $F$ is $2$. 

Again by Abhyankar's lemma, every place $Q$ of $F' | K$ that is not a zero of $t + 2a$ or $t - 2a$ is unramified over $F$.

Since there are ramified places in the extension $F' | F$, it is not a constant field extension, so $K$ is the full constant field of $F' | K$.

We are left with the task of proving that $\O' = \O[x]$. Denote the different of $\O' | \O$ by $\D$, and let $\delta(x)$ be the different of $x$, that is $\delta(x) = f'(x) = 2x - t$. It is well known that $\O' = \O[x]$ if and only if $\D$ is the principal ideal of $\O'$ generated by $\delta(x)$ (see, for example, Theorem V.11.29 from \cite{Zariski1975}). 

Already knowing all ramification indices in the extension $F' | F$, we see that the different $\D$ of $\O' | \O$ is the product of all prime ideals of $\O'$ dividing $(t + 2a)$ or $(t - 2a)$ (use, for example, Theorem III.2.6 from \cite{Neukirch1999} and the assumption $K$ is not of characteristic $2$).

Since
$$\delta(x)^2 = (2 x-t)^2 = t^2 - 4a^2 = (t + 2a)(t - 2a)\text,$$
the ideal of $\O'$ generated by $\delta(x)$ satisfies
$$(\delta(x))^2 = \prod_{\P | (t \pm 2a)}\P^2 = \left(\prod_{\P | (t \pm 2a)} \P\right)^2 = \D^2\text.$$
Here, $\P$ ranges over all prime ideals of $\O'$ dividing $(t + 2 a)$ or $(t - 2 a)$. As we have already seen, the ramification index of each such $\P$ over the prime ideal $(t + 2 a)$ [or $(t - 2a)$] of $K[t]$ is $2$. By unique ideal factorization, the ideal of $\O'$ generated by $\delta(x)$ is $\D$.
\end{proof}

For function fields of characteristic $2$, we use a slightly modified form of Proposition \ref{main_work}.

\begin{proposition}\label{main_work_char_2}
Let $K$ be a perfect field of characteristic $2$, $F | K$ an algebraic function field with full constant field $K$, $t$ a separating element of $F | K$, and $\O$ the integral closure of $K[t]$ in $F$. Assume that there is some $a \in K$ such that the zero of $t + a$ in $K(t)$ is unramified in $F | K(t)$. 

Let $F' := F(x)$, where $x$ is a root of the polynomial $f := X^2 + (t + a) X + 1$, and let $\O'$ be the integral closure of $K[t]$ in $F'$. Then $K$ is the full constant field of $F' | K$, $x$ is a unit in $\O'$, $t = x + 1/x + a$, and $\O' = \O[x]$. 
\end{proposition}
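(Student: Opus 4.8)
The plan is to follow the structure of the proof of Proposition \ref{main_work}, but to replace the tame Abhyankar argument by a direct analysis of $F'|F$ as an Artin--Schreier extension, since here the relevant ramification index equals the characteristic and is therefore wild. First I would record the elementary facts exactly as before: the two roots of $f$ in $F'$ are $x$ and $1/x$ (the constant term is $1$), so $x$ is a unit in $\O'$, and from $x^2 + (t+a)x + 1 = 0$ one gets $x + 1/x = t + a$, whence $x + 1/x + a = (t+a) + a = t$ using $2a = 0$. If $f$ is reducible over $F$ then $x, 1/x \in \O$ and the proposition is trivial, so I may assume $f$ is irreducible and $[F':F] = 2$; since $f'(X) = t+a \neq 0$, the extension $F'|F$ is separable.

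The heart of the argument is to rewrite $F' = F(x)$ in Artin--Schreier form over $F$. Setting $w = x/(t+a)$ turns $x^2 + (t+a)x + 1 = 0$ into $w^2 + w = 1/(t+a)^2$. The substitution $w' = w + 1/(t+a) \in F'$ then yields the equivalent equation $w'^2 + w' = 1/(t+a)$, using that $(1/(t+a))^2 + 1/(t+a) = 1/(t+a)^2 + 1/(t+a)$ in characteristic $2$. Now I would invoke the hypothesis: because the zero of $t+a$ in $K(t)$ is unramified in $F|K(t)$, at every zero $Q$ of $t+a$ in $F$ the element $t+a$ is a uniformizer, so $1/(t+a)$ has a pole of order exactly $1$ there, and it has no pole at any other place of $F$. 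Hence $1/(t+a)$ is already in reduced form, and the standard description of Artin--Schreier extensions (Proposition III.7.8 from \cite{Stichtenoth1993}) shows that the ramified places of $F'|F$ are precisely the zeros of $t+a$, each totally (wildly) ramified with ramification index $2$ and different exponent $(2-1)(1+1) = 2$, while all other places are unramified.

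With the ramification in hand, the remaining steps mirror Proposition \ref{main_work}. Since $t$ is a separating element and hence $t+a$ is nonconstant, it has a zero in $F$, so $F'|F$ is genuinely ramified; a ramified degree-$2$ extension cannot be a constant field extension, whence $K$ is the full constant field of $F'|K$. For $\O' = \O[x]$ I would again use the criterion that $\O' = \O[x]$ if and only if the different $\D$ of $\O'|\O$ equals the principal ideal $(\delta(x))$, where $\delta(x) = f'(x) = t+a$ in characteristic $2$. The ramification computation gives $\D = \prod_\P \P^2$, the product over all primes $\P$ of $\O'$ lying above the zero of $t+a$; and since each such $\P$ satisfies $v_\P(t+a) = e(\P \mid Q)\, v_Q(t+a) = 2$, the ideal $(t+a)\O'$ equals the same product $\prod_\P \P^2 = \D$. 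Therefore $(\delta(x)) = \D$ and $\O' = \O[x]$.

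I expect the main obstacle to be exactly the wildness of the ramification: Abhyankar's lemma, which carried the characteristic-not-$2$ case, is unavailable, so the different exponent must be obtained directly from Artin--Schreier theory. The conceptual point to get right is that the unramifiedness assumption on the zero of $t+a$ is precisely what forces the reduced Artin--Schreier datum $1/(t+a)$ to have a simple, hence $p$-coprime, pole, pinning the different exponent down to $2$; this is what makes it match the order of vanishing of $f'(x) = t+a$ along each ramified prime and yields $\O' = \O[x]$.
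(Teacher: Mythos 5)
Your proposal is correct and follows essentially the same route as the paper: the paper likewise sets $y = x/(t+a)$ to obtain $y^2 + y = 1/(t+a)^2$, uses the unramifiedness hypothesis to verify via Proposition III.7.8 of \cite{Stichtenoth1993} that the ramified places of $F'|F$ are exactly the zeros of $t+a$ with ramification index and different exponent $2$, and then matches $\D = \prod_{\P}\P^2$ with $(\delta(x)) = (t+a)$. Your extra substitution $w' = w + 1/(t+a)$ just makes explicit the reduction step that the paper performs inside the valuation computation $v_P\bigl(1/(t+a)^2 - ((1/(t+a))^2 - 1/(t+a))\bigr) = -1$; the two arguments are the same.
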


\begin{proof}
Again, $x$ is a unit in $\O'$, since $x$ and $1/x$ are the roots of the monic polynomial $f \in \O[X]$. Clearly, $t = x + 1/x + a$. The proposition holds again trivially if $f$ is reducible over $F$. Assume from now on that $f$ is irreducible over $F$. 

Putting $y := x/(t + a)$, we get $F' = F(x) = F(y)$ and  $y^2 + y = 1/(t+a)^2$. We use Proposition III.7.8 from \cite{Stichtenoth1993} to prove that the only places of $F|K$ that are ramified in $F'$ are the zeros of $t + a$. Indeed, for each such zero $P$, we have 
$$v_P\left(1/(t+a)^2 - (1/(t+a)^2 - 1/(t+a))\right) = v_P(1/(t+a)) = -1\text,$$
since $P$ is unramified over $K(t)$. For each place $Q$ of $F|K$ that is not a zero of $t + a$, we have
$$v_Q(1/(t + a)^2) \geq 0\text.$$
Therefore, Proposition III.7.8 from \cite{Stichtenoth1993} implies that the places of $F|K$ that are ramified in $F'$ are exactly the zeros of $t + a$, and that the respective ramification indices and different exponents are $2$. We conclude that $K$ is the full constant field of $F | K$ and that the different $\D$ of $\O' | \O$ is of the form
$$\D = \prod_{\P | (t + a)} \P^2\text.$$
Here, $\P$ ranges over all prime ideals of $\O'$ dividing $(t + a)$. On the other hand, the different of $x$ is $\delta(x) = f'(x) = t + a$, and the ideal of $\O'$ generated by $t + a$ is given by
$$(t + a) = \prod_{\P | (t + a)} \P^2 = \D\text.$$
Note that the ramification index of every prime ideal $\P$ of $\O'$ over the prime ideal $(t + a)$ of $K[t]$ is $2$, since $(t + a)$ is unramified in $F$ and the ramification index of $\P$ over $F$ is $2$.

Therefore, $\D = (\delta(x))$, which suffices to prove that $\O' = \O[x]$.
\end{proof}

The following lemma shows a way to enlarge $\O$, while still maintaining the property that $\O' = \O[x]$ from the previous propositions. The results are probably not new, but the author is not aware of an adequate reference. Recall that, for any place $P$ of an algebraic function field, $\O_P$ denotes the discrete valuation ring with maximal ideal $P$.

\begin{lemma}\label{integrally_closed}
Let $F | K$ be an algebraic function field with perfect constant field $K$, $F'| F$ a finite separable extension, and $x \in F'$ with $F' = F(x)$. Let $S \subseteq T$ be sets of places of $F | K$, and assume that $x$ is integral over $\O_S$. Then we have:
\begin{enumerate}[(a)]
 \item If $\O_P[x]$ is integrally closed for all $P \notin S$ then $\O_S[x]$ is integrally closed as well.
 \item If $\O_S[x]$ is integrally closed then $\O_T[x]$ is integrally closed as well.
 \item If $x$ is algebraic over $K$ then $\O_T[x]$ is integrally closed.
\end{enumerate}
\end{lemma}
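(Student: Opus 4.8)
The plan is to reduce all three parts to a single local statement about a discrete valuation ring, using the description of $\O_S$ as an intersection of valuation rings together with the behaviour of integral closure under localization and under intersection. Write $n = [F':F]$ and let $m \in F[X]$ be the minimal polynomial of $x$ over $F$. Since $\O_S = \bigcap_{P \notin S} \O_P$ and each $\O_P$ are integrally closed and $x$ is integral over them, $m$ has coefficients in $\O_S$, and $\{1, x, \dots, x^{n-1}\}$ is a free-module basis of $\O_S[x]$ and of every $\O_P[x]$. Hence an element $\sum_{i < n} c_i x^i$ of $F'$ (with $c_i \in F$) lies in $\O_P[x]$ iff every $c_i \in \O_P$, and in $\O_S[x]$ iff every $c_i \in \O_S$; comparing with $\O_S = \bigcap_{P \notin S}\O_P$ gives the key identity
$$\O_S[x] = \bigcap_{P \notin S}\O_P[x]\text.$$

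For part (a) I would take any $\alpha \in F'$ integral over $\O_S[x]$. As $\O_S[x] \subseteq \O_P[x]$ for each $P \notin S$, $\alpha$ is integral over every $\O_P[x]$; since these are integrally closed by hypothesis and have fraction field $F'$, we get $\alpha \in \O_P[x]$ for all $P \notin S$, hence $\alpha \in \O_S[x]$ by the displayed identity. For part (b) I would first note that for $P \notin S$ the ring $\O_P = (\O_S)_{\p}$ is a localization of $\O_S$ at the prime $\p$ corresponding to $P$, so $\O_P[x]$ is a localization of $\O_S[x]$. Since localizations of integrally closed domains are integrally closed, the hypothesis that $\O_S[x]$ is integrally closed yields that $\O_P[x]$ is integrally closed for every $P \notin S$, in particular for every $P \notin T$ (as $S \subseteq T$). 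Applying part (a) with $T$ in place of $S$ then shows that $\O_T[x]$ is integrally closed.

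For part (c) I would again invoke part (a) (with $T$ in place of $S$) to reduce to proving that $\O_P[x]$ is integrally closed for each place $P \notin T$. Fix such a $P$; then $\O_P$ is a discrete valuation ring, $x$ is integral over $\O_P$, and $\O_P[x] = \O_P[X]/(m)$ with $m \in \O_P[X]$ monic and irreducible. The point is that $\operatorname{disc}(m)$ is a unit in $\O_P$: letting $m_K \in K[X]$ be the minimal polynomial of $x$ over $K$, perfectness of $K$ makes $m_K$ separable, so $\operatorname{disc}(m_K) \in K^\times \subseteq \O_P^\times$; writing $m_K = m h$ with $h \in \O_P[X]$ monic and using $\operatorname{disc}(m_K) = \operatorname{disc}(m)\operatorname{disc}(h)\operatorname{Res}(m,h)^2$ forces the factor $\operatorname{disc}(m) \in \O_P$ to be a unit. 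A monogenic extension $\O_P[X]/(m)$ of the normal domain $\O_P$ whose discriminant is a unit is itself integrally closed (equivalently, $x$ generates an unramified, étale extension of $\O_P$), which completes part (c).

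I expect the main obstacle to be the local computation in part (c): making precise that ``algebraic over $K$'' together with perfectness forces $\operatorname{disc}(m)$ to be a unit, and citing the correct form of the criterion that a monogenic extension with unit discriminant is integrally closed (the analogue of the $\D = (\delta(x))$ criterion already used in Propositions \ref{main_work} and \ref{main_work_char_2}). By contrast, the identity $\O_S[x] = \bigcap_{P \notin S}\O_P[x]$ and the localization step underlying parts (a) and (b) are routine once the free-module structure is in place.
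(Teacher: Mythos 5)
Your parts (a) and (b) follow the paper's proof essentially verbatim: the key identity $\O_S[x]=\bigcap_{P\notin S}\O_P[x]$, obtained by comparing coordinates with respect to the basis $\{1,x,\ldots,x^{n-1}\}$, gives (a), and the observation that $\O_P[x]$ is a ring of fractions of $\O_S[x]$ reduces (b) to (a). Where you genuinely diverge is part (c). The paper disposes of it in one line: (c) is exactly the special case $S=\emptyset$ of (b), because $\O_\emptyset=\bigcap_P\O_P$ is the full constant field $K$, so the hypothesis ``$x$ integral over $\O_\emptyset$'' is ``$x$ algebraic over $K$'' and $\O_\emptyset[x]=K[x]$ is a field, hence trivially integrally closed. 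You instead prove directly that each local ring $\O_P[x]$ with $P\notin T$ is integrally closed, by showing that the discriminant of the minimal polynomial $m$ of $x$ over $F$ is a unit in $\O_P$ (via $m\mid m_K$ and separability of $m_K$ over the perfect field $K$) and then invoking the unit-discriminant criterion for monogenic extensions. This argument is correct --- the standard conductor lemma $m'(x)C\subseteq \O_P[x]$, where $C$ is the integral closure of $\O_P$ in $F'$, supplies exactly the criterion you ask for, and the discriminant product formula forces each factor of the unit $\operatorname{disc}(m_K)$ to be a unit --- and it even yields the slightly stronger local statement that $\O_P[x]$ is \'etale over $\O_P$. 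But it is considerably heavier machinery than needed: the one observation you missed is that $\O_S$ with $S=\emptyset$ is already a field, which collapses (c) into (b) with no local computation at all.
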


\begin{proof}
Denote the integral closure of $\O_S$ in $F'$ by $\O'$. Clearly, $\O_S[x] \subseteq \O'$. To prove \emph{(a)}, we need to show that $\O_S[x] = \O'$. Let $S'$ be the set of places of $F'|K$ lying over places in $S$. We have
$$\O' = \bigcap_{P' \notin S'} \O_{P'} = \bigcap_{P \notin S} \bigcap_{P' | P} \O_{P'} = \bigcap_{P \notin S} (\O_P[x])\text.$$
Here, $P'$ denotes places of $F'|K$ and $P$ denotes places of $F | K$. The third equality follows from the assumption that $\O_P[x]$ is integrally closed and the fact that $x$ is integral over $\O_P$, for all $P \notin S$. Therefore, it is sufficient to show that
$$\bigcap_{P \notin S}(\O_P[x]) = \left(\bigcap_{P \notin S}\O_P\right)[x]\text.$$
Clearly, the right-hand side of the above equality is included in the left-hand side. Now let $f$ be an arbitrary element of $\bigcap_{P \notin S}(\O_P[x])$. Denote the degree $[F' : F]$ by $n$. Then, for each $P \notin S$, there is some polynomial $g_P \in \O_P[X]$ of degree smaller than $n$, with $f = g_P(x)$. Since $\{1, x, \ldots, x^{n-1}\}$ is a basis of $F' | F$, all $g_P$ are equal and thus elements of $\left(\bigcap_{P \notin S}\O_P\right)[X]$. This shows the other inclusion.

To prove \emph{(b)}, notice that, for all $P \notin S$, $\O_P$ is the localization of $\O_S$ at the unique prime ideal $\P$ of $\O_S$ corresponding to the place $P$. Therefore, $\O_P[x]$ can be seen as ring of fractions of $\O_S[x]$ with denominators in the multiplicative set $\O_S \smallsetminus \P$. Assume that $\O_S[x]$ is integrally closed. By the above argument, $\O_P[x]$ is integrally closed for all $P \notin S$, in particular for all $P \notin T$, so \emph{(b)} follows from \emph{(a)}.

The special case of $\emph{(b)}$ with $S = \emptyset$ is exactly $\emph{(c)}$.
\end{proof}

As an immediate consequence of Lemma \ref{integrally_closed} \emph{(c)} and the primitive element theorem, we get that finite constant field extensions have property (II.) from the overview presented at the end of Section \ref{introduction} (see also the third paragraph of Remark 6.1.7 from \cite{Fried2008} for a more general formulation):

\begin{corollary}\label{integral_closure_constant_field_extension}
Let $F | K$ be an algebraic function field with perfect constant field $K$, $S$ a set of places of $F | K$, and $K' | K$ a finite extension. Then the integral closure of $\O_S$ in $K'F$ is $K'\O_S$.
\end{corollary}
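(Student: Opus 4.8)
The plan is to reduce the statement to a single application of Lemma \ref{integrally_closed}~\emph{(c)} after producing a primitive element. Since $K$ is perfect, the finite extension $K' | K$ is separable, so by the primitive element theorem there is some $\alpha$, algebraic over $K$, with $K' = K(\alpha)$. Because $\alpha$ is algebraic over the field $K$ we in fact have $K' = K[\alpha]$. Now set $F' := K'F = F(\alpha)$ and take $x := \alpha$ as the generator. The goal is then to show that the integral closure of $\O_S$ in $F'$ is exactly $\O_S[\alpha]$, and to identify $\O_S[\alpha]$ with $K'\O_S$.

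First I would check that the hypotheses of Lemma \ref{integrally_closed} are met for $F' = F(x)$ with $x = \alpha$. The extension $F' | F$ is finite of degree dividing $[K':K]$, and it is separable: the element $\alpha$ is separable over $K$, hence separable over $F$ (its minimal polynomial over $F$ divides the separable minimal polynomial over $K$), so $F(\alpha) | F$ is separable. Moreover $x = \alpha$ is algebraic over $K$ and, since the constants satisfy $K \subseteq \O_S$, it is integral over $\O_S$. Thus Lemma \ref{integrally_closed}~\emph{(c)} applies with $T = S$ and yields that $\O_S[x] = \O_S[\alpha]$ is integrally closed, i.e.\ equal to the integral closure of $\O_S$ in $F'$. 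Concretely, $\O_S[\alpha]$ is integral over $\O_S$ (being generated over $\O_S$ by the integral element $\alpha$) and integrally closed in its fraction field $F'$, so any element of $F'$ integral over $\O_S$ is integral over $\O_S[\alpha]$ and therefore already lies in $\O_S[\alpha]$; this is what makes it the full integral closure.

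It then remains to identify $\O_S[\alpha]$ with $K'\O_S$. Since $K' = K[\alpha]$ and $K \subseteq \O_S$, the ring $\O_S[\alpha]$ contains both $K'$ and $\O_S$, hence contains the compositum $K'\O_S$; conversely $K'\O_S$ contains $\O_S$ and $\alpha$, hence contains $\O_S[\alpha]$. Therefore $\O_S[\alpha] = K'\O_S$, and combining this with the previous paragraph gives that the integral closure of $\O_S$ in $K'F$ equals $K'\O_S$, as claimed.

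Since the real content sits in Lemma \ref{integrally_closed}, there is no serious obstacle here; the only points requiring care are the two places where perfectness of $K$ is used (to invoke the primitive element theorem and to guarantee the separability hypothesis of the lemma) and the purely algebraic bookkeeping identity $K'\O_S = \O_S[\alpha]$, which rests on $K' = K[\alpha]$ and $K \subseteq \O_S$.
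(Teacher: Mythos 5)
Your proposal is correct and follows exactly the route the paper intends: the paper states the corollary as "an immediate consequence of Lemma \ref{integrally_closed}~\emph{(c)} and the primitive element theorem," and your argument (primitive element $\alpha$ with $K'=K[\alpha]$, separability of $F(\alpha)|F$ from perfectness of $K$, Lemma \ref{integrally_closed}~\emph{(c)} applied to $x=\alpha$, and the identification $\O_S[\alpha]=K'\O_S$) is precisely the expansion of that sketch. Nothing is missing.
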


To use Propositions \ref{main_work} and \ref{main_work_char_2}, we need to ensure that we can always find an $a$ as required. This is accomplished by the following lemma.

\begin{lemma}\label{tplusminus2a}
Let $F | K$ be an algebraic function field with perfect constant field $K$, and $t \in F \smallsetminus K$. Then there is a finite extension $K_0 | K$ and an element $a \in K_0^\times$, such that the zeros of $t-a$ and $t+a$ in $K_0(t)$ are unramified in the extension $K_0 F | K_0(t)$.

If $F$ is separable over $K(t)$ then $K_0 F$ is separable over $K_0(t)$.
\end{lemma}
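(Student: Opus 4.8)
The plan is to reduce everything to the finiteness of the branch locus of $F \mid K(t)$ and the fact that constant-field extensions create no ramification, so that the only genuine constraint on $a$ is that the two linear places $t = a$ and $t = -a$ avoid the finitely many places of $K(t)$ that ramify in $F$. First I would note that, since $K$ is the full (perfect) constant field of $F$ and $t \in F \smallsetminus K$, the element $t$ is transcendental over $K$; hence $K(t)$ is rational and $F \mid K(t)$ is a finite extension, which therefore has only finitely many ramified places. The ramified places of $K(t)$ in $F$ thus correspond to finitely many monic irreducible polynomials $p_1, \ldots, p_r \in K[t]$, together with possibly the place at infinity.

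The heart of the proof is to transport the word \emph{unramified} from $F \mid K(t)$ across the constant-field extension to $K_0 F \mid K_0(t)$. For $a$ in a finite extension $K_0 \mid K$, the zero $P_a$ of $t - a$ in $K_0(t)$ lies over the place $m_a$ of $K(t)$ attached to the minimal polynomial of $a$ over $K$; since $K$ is perfect this minimal polynomial is separable, so $(t-a)$ occurs with multiplicity one in its factorization over $K_0$, giving $e(P_a \mid m_a) = 1$. The constant-field extension $K_0 F \mid F$ is likewise unramified. Writing $Q$ for a place of $K_0 F$ over $P_a$ and $q := Q \cap F$, multiplicativity of ramification indices along the two towers $K(t) \subseteq K_0(t) \subseteq K_0 F$ and $K(t) \subseteq F \subseteq K_0 F$ gives $e(Q \mid P_a)\, e(P_a \mid m_a) = e(Q \mid q)\, e(q \mid m_a)$, and with the two indices just shown to be $1$ this collapses to $e(Q \mid P_a) = e(q \mid m_a)$. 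Hence $P_a$ is unramified in $K_0 F \mid K_0(t)$ as soon as $m_a$ is unramified in $F \mid K(t)$, i.e.\ as soon as $a$ is not a root of any $p_i$; the same reasoning applied with $-a$ in place of $a$ handles the zero of $t + a$.

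It then remains to produce a suitable $a$, which is pure counting. The forbidden values --- namely $0$, the roots of $p_1, \ldots, p_r$, and the negatives of those roots --- form a finite subset of $\ol{K}$, and the zeros of $t \pm a$ are finite places, so the possible branch point at infinity plays no role. If $K$ is infinite I would simply choose $a \in K^\times$ outside this finite set and take $K_0 = K$. If $K$ is finite I would pass to a finite extension $K_0 \mid K$ large enough that $K_0^\times$ has more elements than the (bounded, $m$-independent) forbidden set and select $a$ there.

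For the final clause I would use that $K_0 F$ is the compositum $F \cdot K_0(t)$ inside a common overfield (as $t \in F$): separability of a finite algebraic extension is preserved under base change, so if $F \mid K(t)$ is separable then $K_0 F \mid K_0(t)$ is separable as well. The main obstacle is the second paragraph: verifying carefully that extending constants neither introduces nor removes ramification over the linear places $t = \pm a$, which is what lets the whole question reduce to dodging a finite set of branch points. The remaining steps are routine.
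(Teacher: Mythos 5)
Your proof is correct and takes essentially the same route as the paper's: both reduce the problem to avoiding the finitely many ramified places of $F \mid K(t)$ and use that constant-field extensions are unramified to transfer unramifiedness between $F \mid K(t)$ and $K_0 F \mid K_0(t)$, differing only in that the paper picks $a$ in $\ol{K}$ and descends to $K_0 = K(a)$ via a tower of places above and below $\ol{K}F$, while you work directly with $K_0(t) \mid K(t)$, the separability of the minimal polynomial of $a$, and multiplicativity of ramification indices. One caveat shared by both arguments: the claim that a finite extension $F \mid K(t)$ has only finitely many ramified places requires $F \mid K(t)$ to be separable (a purely inseparable extension is ramified everywhere), so your ``therefore'' elides the same point the paper does --- harmless in context, since the lemma is only ever applied to separating elements $t$.
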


\begin{proof}
The first part of the lemma clearly holds if $K$ is infinite, since there are only finitely many ramified places in $F | K(t)$, so we can put $K_0 := K$.

In the general case, consider the algebraic closure $\ol{K}$ of $K$ in some algebraically closed field $\Phi \supseteq F$ and the constant field extension $\ol{K} F | \ol{K}$ of $F | K$. Since $\ol{K}$ is infinite, we find some $a \in \ol{K}$, such that the zeros of $t - a$ and $t + a$ in $\ol{K}(t)$ are unramified in $\ol{K} F$. Put $K_0 := K(a)$. Then the zeros of $t - a$ and $t + a$ in $K_0(t)$ are unramified in $K_0 F$, as desired. Indeed, let $\ol{P}'$ be a place of $\ol{K} F | \ol{K}$ lying over the zero $P$ of, say, $t + a$ in $K_0(t)$. Put $P' := \ol{P}' \cap K_0 F$ and $\ol{P} := \ol{P}'\cap \ol{K}(t)$. We know that $\ol{P}' | \ol{P}$ is unramified. From $\ol{P}' | \ol{P} | P$ and the fact that constant field extensions are unramified, it follows that $\ol{P}' | P$ is unramified. Now $\ol{P}' | P' | P$ implies that $P' | P$ is unramified.

The assertion regarding separability holds because if $F$ is separable over $K(t)$ then $K_0 F$ is generated over $K_0(t)$ by separable elements.
\end{proof}

\section{Proof of Theorem 2}\label{proof}
For convenience, let us state the theorem again.
\setcounter{theorem}{1}
\begin{theorem}
Let $K$ be a perfect field, $F | K$ an algebraic function field over $K$, and $S \neq \emptyset$ a finite set of places of $F | K$. Let $\O_S$ be the ring of $S$-integers of $F$. Then there exists a finite extension $F'|F$ such that the integral closure of $\O_S$ in $F'$ is generated by its units (as a ring). 
\end{theorem}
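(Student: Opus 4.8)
The plan is to carry out the inductive scheme outlined at the end of Section~\ref{introduction}, using the auxiliary results of Section~\ref{auxiliary}. First I would reduce to the case that $K$ is the full constant field of $F \mid K$: the algebraic closure $\tilde K$ of $K$ in $F$ is a finite, hence perfect, extension of $K$, it is contained in every valuation ring $\O_P$, so the places of the function field and the ring $\O_S$ are unchanged, while the conclusion ``generated by its units'' is a property of the ring alone. Next, $\O_S$ is a finitely generated $K$-algebra; I would take a finite generating set and drop the generators in $K$ (nonzero constants are already units). It is crucial that the remaining generators be separating, so that Propositions~\ref{main_work} and~\ref{main_work_char_2} apply. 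In characteristic $0$ every non-constant element is separating. In characteristic $p$, since $K$ is perfect one has $K \subseteq F^p$ and the separating elements are exactly those of $F \smallsetminus F^p$; as $\mathrm{Frac}(\O_S) = F \neq F^p$, at least one generator $t$ is separating, and every non-separating generator $g$ (necessarily in $F^p$) may be replaced by $g + t \in F \smallsetminus F^p$ without changing the generated ring. This produces separating generators $t_1, \ldots, t_n$ of $\O_S$ over $K$.

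I would then build a tower $F = F_0 \subseteq F_1 \subseteq \cdots \subseteq F_n$ with full constant fields $K = K_0 \subseteq \cdots \subseteq K_n$, each finite over $K$, maintaining, with $\O^{(i)}$ the integral closure of $\O_S$ in $F_i$: (I) $t_1, \ldots, t_i$ are sums of units in $\O^{(i)}$, and (II) $\O^{(i)}$ is generated by units as a ring extension of $\O_S$. The case $i = 0$ is trivial. For the step, note $t_i \in \O_S \subseteq \O^{(i-1)}$ and that $t_i$ is still separating over $K_{i-1}$ in $F_{i-1}$, because all the extensions used are separable (the adjoined quadratics have nonzero discriminant resp. nonzero linear coefficient, and the constant extensions of the perfect field $K$ are separable). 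Applying Lemma~\ref{tplusminus2a} to $F_{i-1} \mid K_{i-1}$ and $t_i$, I pass to a finite constant extension $K_i \supseteq K_{i-1}$ and obtain $a$ making the relevant places unramified; then I apply Proposition~\ref{main_work} (or Proposition~\ref{main_work_char_2} when $\mathrm{char}\,K = 2$) over $K_i$ to $G := K_i F_{i-1}$, adjoining a root $x_i$ of $X^2 - t_i X + a^2$ (resp. $X^2 + (t_i + a)X + 1$), and set $F_i := G(x_i)$. The proposition gives that $x_i$ is a unit and that $t_i = x_i + a^2/x_i$ (resp. $x_i + 1/x_i + a$) is a sum of units, so (I) holds for $t_i$; the earlier $t_j$ stay sums of units since $\O^{(i-1)} \subseteq \O^{(i)}$ and units remain units.

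The hard part is to keep (II) intact across this step, and it is here that the three tools must interlock. The proposition only yields $\O' = \O[x_i]$, where $\O$ is the integral closure of $K_i[t_i]$ in $G$, i.e. the ring of $S_0$-integers of $G$ for $S_0$ the finite set of poles of $t_i$, whose places lie over $S$. To lift this to the full ring of $S$-integers I would invoke Lemma~\ref{integrally_closed}(b): writing $\O_S^G$ for the integral closure of $\O_S$ in $G$, since $\O[x_i]$ is integrally closed and $S_0$ is contained in the set of places of $G$ over $S$, the ring $\O_S^G[x_i]$ is integrally closed, hence equals $\O^{(i)}$. By Corollary~\ref{integral_closure_constant_field_extension} one has $\O_S^G = K_i \O^{(i-1)}$, so $\O^{(i)} = K_i\,\O^{(i-1)}[x_i]$; combined with the inductive hypothesis $\O^{(i-1)} = \O_S[\text{units}]$ and the fact that $x_i$ and the nonzero elements of $K_i$ are units, this gives (II). Finally, $F' := F_n$ is a finite extension of $F$, and $\O^{(n)}$ is generated over $\O_S$ by units while $\O_S$ is generated over $K$ by the $t_j$, each a sum of units; since $K^\times$ consists of units, the subring of $\O^{(n)}$ generated by its units contains $K$, all $t_j$ and all the unit generators, hence is all of $\O^{(n)}$, which proves the theorem. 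I expect this transfer of property (II)---from the $K[t_i]$-integral closure governed by the propositions, through the localization argument of Lemma~\ref{integrally_closed}(b), across the constant extension handled by Corollary~\ref{integral_closure_constant_field_extension}---to be the main obstacle.
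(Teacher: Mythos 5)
Your proof is correct and follows the same architecture as the paper's: the inductive scheme announced in the introduction, with Propositions~\ref{main_work} and \ref{main_work_char_2} supplying the quadratic extensions, Lemma~\ref{tplusminus2a} the constants $a$, Corollary~\ref{integral_closure_constant_field_extension} the constant field extensions, and Lemma~\ref{integrally_closed}\emph{(b)} the transfer from the integral closure of $K_i[t_i]$ to that of $\O_S$. You deviate in two technical points, both legitimate. First, the paper uses the strong approximation theorem to manufacture a single separating element $t$ whose pole set is exactly $S$, so that $\O_S$ is the integral closure of $K[t]$ in $F$; this simultaneously yields that $\O_S$ is a finitely generated $K$-algebra (via an integral basis over $K[t]$) and makes the first application of Proposition~\ref{main_work} land directly on the integral closure of $\O_S$. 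You instead quote finite generation of $\O_S$ over $K$ as known and run the Lemma~\ref{integrally_closed}\emph{(b)} transfer in every step; that works, but you should either cite finite generation or note that the paper's construction of $t$ is the cheapest way to prove it. Second, the paper does not arrange for its generators $t_i$ to be separating: in characteristic $p$ it extracts, inside $F_{i-1}$, the maximal $p^m$-th root $s_i$ of $t_i$ and makes $s_i$ (rather than $t_i$) a sum of units. You instead preprocess once, keeping one separating generator $t$ and replacing each generator $g \in F^p$ by $g + t \notin F^p$; this is correct and arguably cleaner, but the persistence claim --- that $t_i$, separating for $F \mid K$, remains separating for $F_{i-1} \mid K_{i-1}$ --- deserves its one-line proof: if $t_i = u^p$ with $u \in F_{i-1}$, then $u$ is purely inseparable over $F$, hence lies in $F$ by separability of $F_{i-1} \mid F$, contradicting $t_i \notin F^p$. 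With those two points made explicit, your argument is complete, and each route buys something: the paper's is self-contained on finite generation and robust against inseparable generators, while yours avoids both the strong approximation step and the $p$-power-root bookkeeping in the induction.
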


It is enough to prove Theorem \ref{extension_problem_function_field} under the assumption that $K$ is the full constant field of $F|K$, since then the general case follows as well.

Denote the characteristic of $K$ by $p \geq 0$, and assume first that $p \neq 2$. We find a separating element $t$ of $F | K$ such that $\O_S$ is the integral closure of $K[t]$ in $F$. To this end, choose places $Q \in S$ and $R$, $R' \notin S$ of $F|K$. By the strong approximation theorem, we can find an element $t \in F$ that satisfies the conditions
\begin{align*}v_R(t) &= 1\text,\\v_{R'}(t) &= \sum_{P \in S\smallsetminus\{Q\}}\deg P\text,\\v_P(t) &= -1\text{, for all } P\in S\smallsetminus\{Q\}\text{, and}\\v_P(t) &\geq 0\text{, for all places } P\notin S\cup \{R, R'\}\text{.}
\end{align*}
Since the principal divisor of $t$ has degree $0$, it follows that $v_Q(t) < 0$. Therefore, the poles of $t$ are exactly the elements of $S$. Moreover, $t$ is not a $p$-th power, since $p$ does not divide $v_R(t) = 1$. It follows that $F$ is separable over $K(t)$ (see, for example, Proposition III.9.2 (d) from \cite{Stichtenoth1993}) and the integral closure of $K[t]$ in $F$ is exactly $\O_S$.

Choose some non-constant elements $t_1, \ldots, t_n$ of $\O_S$, such that $\O_S = K[t, t_1, \ldots, t_n]$ (for example, let $\{t_1, \ldots, t_n\}$ be an integral basis of $\O_S$ over $K[t]$ and omit a possible constant).

Lemma \ref{tplusminus2a} permits us to find a finite extension $K_0 | K$ and some $a \in K_0^\times$, such that the zeros of $t - 2a$ and $t + 2a$ in $K_0(t)$ are unramified in $K_0 F$.  By Corollary \ref{integral_closure_constant_field_extension}, the integral closure of $\O_S$ in $K_0 F$ is $K_0 \O_S = K_0[t, t_1, \ldots, t_n]$. 

Proposition \ref{main_work} yields a finite extension $F_0 | K_0$ of $K_0 F | K_0$, such that $t$ is a sum of units in the integral closure $\O_0$ of $\O_S$ in $F_0$, and $\O_0 = K_0 \O_S [x_0] = K_0[t, t_1, \ldots, t_n, x_0]$, for some unit $x_0$ of $\O_0$. Moreover, $K_0$ is the full constant field of $F_0 | K_0$.

We inductively construct finite extensions $F_1 | K_1$, $\ldots$, $F_n | K_n$ of $F_0 | K_0$ with the following properties. If $\O_i$ denotes the integral closure of $\O_S$ in $F_i$ then we have, for $i \in \{0, \ldots, n\}$: 
\begin{itemize}
 \item $\O_{i} = K_i[t, s_1, \ldots, s_i, t_{i+1}, \ldots t_n, x_0, x_1, \ldots, x_i]$, where $x_0$, $\ldots$, $x_i$ are units of $\O_i$, and for all $1 \leq j \leq i$ there is some $m$ with $s_j^{p^{m}} = t_j$.
 \item $t$, $s_1$, $\ldots$, $s_i$ are sums of units of $\O_i$.
 \item $K_i$ is the full constant field of $F_i | K_i$.
\end{itemize}

For $i = 0$, the function field $F_0 | K_0$ has all desired properties. Let $i \in \{1, \ldots, n\}$ and assume that we have constructed $F_{i-1} | K_{i-1}$. The figure on page \pageref{fielddiagram} shows the relations between the rings and fields constructed in the following paragraphs.

Take the maximal non-negative integer $m$ such that $t_i$ is a $p^m$-th power in $F_{i-1}$ (the maximum exists since $t_i$ is not constant), and let $s_i$ be the $p^m$-th root of $t_i$. (If $p=0$, simply put $s_i := t_i$.) Then $s_i \in \O_{i-1}$, since $s_i$ has the same poles as $t_i$. Therefore, $\O_{i-1} = K_{i-1}[t, s_1, \ldots, s_i, t_{i+1}, \ldots, t_n, x_0, \ldots, x_{i-1}]$.

Since $s_i$ is not a $p$-th power in $F_{i-1}$, it is a separating element of $F_{i-1} | K_{i-1}$ (again, we used Proposition III.9.2 (d) from \cite{Stichtenoth1993}). By Lemma \ref{tplusminus2a}, there is some finite extension $K_i | K_{i-1}$ and some $a \in K_i^\times$ such that the zeros of $s_i - 2a$ and $s_i + 2a$ in $K_i(s_i)$ are unramified in $K_i F_{i-1}$, and $K_i F_{i-1}$ is separable over $K_i(s_i)$.

Denote the integral closure of $K_i[s_i]$ in $K_i F_{i-1}$ by $\O$. By Proposition \ref{main_work}, there is a finite extension $F_i | K_i$ of $K_i F_{i-1} | K_i$, such that the integral closure of $\O$ in $F_i$ is $\O[x_i]$, for some unit $x_i$, and $s_i$ is a sum of units in $\O[x_i]$. Moreover, $K_i$ is the full constant field of $F_i | K_i$. 

By our convention, $\O_i$ is the integral closure of $\O_S$ in $F_i$, and thus as well the integral closure of $\O_{i-1}$ in $F_i$. By Corollary \ref{integral_closure_constant_field_extension}, the integral closure of $\O_{i-1}$ in $K_i F_{i-1}$ is $K_i \O_{i-1}$. Since $s_i \in \O_{i-1}$, we have $\O \subseteq K_i \O_{i-1}$. Let $U$ be the set of poles of $s_i$ in $K_i F_{i-1}$, and $V \supseteq U$ the set of poles of $t$ in $K_i F_{i-1}$. Then $\O = \O_U$ and $K_i \O_{i-1} = \O_V$.  Since $\O_U[x_i] = \O[x_i]$ is integrally closed, Lemma \ref{integrally_closed} \emph{(b)} implies that $\O_V[x_i] = K_i \O_{i-1}[x_i]$ is integrally closed as well. Therefore, $K_i \O_{i-1} [x_i]$ is $\O_i$, the integral closure of $\O_{i-1}$ in $F_i$. We conclude that 
$$\O_i = K_i[t, s_1, \ldots, s_i, t_{i+1}, \ldots, t_n, x_0, \ldots, x_i]\text,$$
as desired. The elements $x_0$, $\ldots$, $x_{i-1}$ are units in $\O_i$, because they are units in $\O_{i-1} \subseteq \O_i$. Moreover, $x_i$ is a unit in $\O_i$, since it is a unit in $\O[x_i] \subseteq \O_i$. Therefore, $t$, $s_1$, $\ldots$, $s_i$ are sums of units of $\O_i$, and the induction is complete.

\begin{figure}\label{fielddiagram}
$$\xymatrix@C-=5pt@R-=5pt{
**[l] F_i\ar@{-}[dd]_{\text{ Proposition 4}} & & & & **[r]F_i \ar@{-}[dd]\\
& \O[x_i] \ar@{-}[dd] \ar@{-}[lu] &\subseteq& \O_i \ar@{-}[dd] \ar@{-}[ru]&  &\\
**[l]K_i F_{i-1} \ar@{-}[dd]_{\text{ Lemma 8}}& & & & **[r]K_i F_{i-1} \ar@{-}[dd]\\
& \O \ar@{-}[dd] \ar@{-}[lu] &\subseteq& K_i \O_{i-1} \ar@{-}[dd] \ar@{-}[ru]&  &\\
**[l]K_i(s_i) & & & & **[r]F_{i-1}\\
& K_i[s_i] \ar@{-}[lu]& & \O_{i-1} \ar@{-}[ru]&  &\\
}$$
\caption{The rings and fields occurring in the induction step.}
\end{figure}
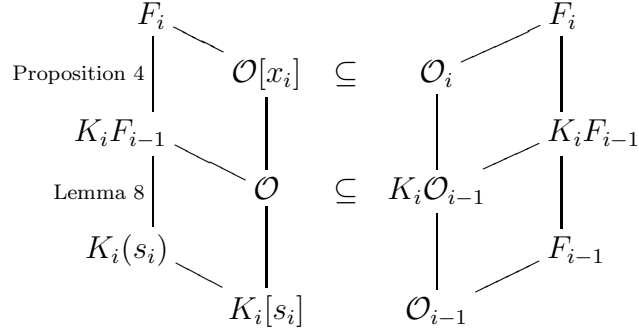

Now put $F' | K' := F_n | K_n$, and Theorem \ref{extension_problem_function_field} is proved whenever the characteristic of $K$ is not $2$. In characteristic $2$, the proof is exactly the same as above, except that we always write $a$ instead of $2a$ and use Proposition \ref{main_work_char_2} instead of Proposition \ref{main_work}.

\subsection*{Acknowledgements}
The author is supported by the Austrian Science Foundation (FWF) project S9611-N23.

\bibliographystyle{plain}
\bibliography{extension_problem}

\noindent Christopher Frei\\
Technische Universit\"at Graz\\
Institut f\"ur Analysis und Computational Number Theory\\
Steyrergasse 30, 8010 Graz, Austria\\
E-mail: frei@math.tugraz.at\\
\url{http://www.math.tugraz.at/~frei}

\end{document}